\newtheorem{theorem}{Theorem}[section]
\newtheorem{corollary}[theorem]{Corollary}
\newtheorem{lemma}[theorem]{Lemma}
\newtheorem{proposition}[theorem]{Proposition}
\newtheorem{definition}[theorem]{Definition}
\newtheorem{remark}[theorem]{Remark}
\newtheorem{conjecture}[theorem]{Conjecture}
\begin{document}

\title{Quandle Identities and Homology}

\author{W. Edwin Clark \ and \ Masahico  Saito \\
Department of Mathematics and Statistics\\ University of South Florida
 }

\date{\empty}

\maketitle

\begin{abstract}
Quandle homology was defined from rack homology as the quotient by 
a subcomplex corresponding to the idempotency, for invariance under the type I Reidemeister move.
Similar subcomplexes have been considered for various identities of racks and moves on diagrams.
We observe common aspects of these identities and subcomplexes; a quandle identity gives rise to a $2$-cycle, 
 the abelian extension with a $2$-cocycle that vanishes on the $2$-cycle 
inherits the identity, 
and  a subcomplex is constructed from the identity.
Specific identities are examined among small connected quandles.
\end{abstract}

\section{Introduction}

Quandle homology \cite{CJKLS} was defined from rack homology \cite{FRS1}
as the quotient by 
a subcomplex corresponding to the idempotency, for invariance under the type I Reidemeister move.
Similar subcomplexes have been considered for various identities of racks and moves on diagrams.
Typically  a certain change of  knot diagrams requires a condition for  quandle $2$-cocycles to satisfy
to obtain desired cocycle invariant, and the condition leads to a subcomplex.
For example, for defining quandle cocycle invariants for unoriented knots, a good involution was defined in \cite{KO},
and a corresponding condition for cocycles and a subcomplex were defined. For a telephone cord move for racks, 
a condition for rack $2$-cocycles and a subcomplex were defined in \cite{EN}. 
Certain moves on handlebody-links were considered in \cite{CIST,IIJO}, and corresponding subcomplexes were defined.
In this paper, we give a construction of a $2$-cycle  $L$ from a given  identity  of quandles in a certain form, 
show that the abelian extension  with a $2$-cocycle $\phi$ such that $\phi(L)=0$ 
inherits the identity,  
and construct  a  subcomplex from the identity.
The homology and cohomology of these subcomplexes remain to be investigated.

Preliminary material and definitions are provided in Section~\ref{sec:prelim}, and
an outline of the method is explained using type 3 racks in Section~\ref{sec:type3}.
The main results are presented in Section~\ref{sec:main} with proofs, and applied to 
type $n$ quandles and identities similar to the Burnside relations. 
In Section~\ref{sec:id}, specific identities  are examined among small connected quandles,
and a large class of Alexander quandles is given that satisfies many identities.

\section{Preliminary} \label{sec:prelim}

In this section, we provide preliminary material, definitions and notation.
More details can be found, for example, in \cite{CKS,FR}. 

A {\it rack} $X$ is a non-empty set with a binary operation $(a, b) \mapsto a * b$
satisfying the following conditions.

\medskip

%\begin{eqnarray*}
 {\rm (1)} \   For any $b \in X$, the map $R_b: X \rightarrow X$ defined by $R_b(a)=a*b$ for
$a \in X$ is a bijection.
%} \label{axiom2} \\

  {\rm (2)} \ 
 For any $a,b,c \in X$, we have
$ (a*b)*c=(a*c)*(b*c). $
%} \label{axiom3} 
%\end{eqnarray*}

\medskip

The map $R_b$ in the first  axiom is called the {\it right translation by $b$}. By axioms $R_b$ is a {\it  rack isomorphism}.
A {\it quandle } $X$  is a rack  with  idempotency: $a*a=a$ for any $a \in X$.
 A {\it quandle homomorphism} between two quandles $X, Y$ is
 a map $f: X \rightarrow Y$ such that $f(x*_X y)=f(x) *_Y f(y) $, where
 $*_X$ and $*_Y$ 
 denote 
 the quandle operations of $X$ and $Y$, respectively.
A {\it generalized Alexander quandle} is defined  by  
a pair $(G, f)$ where 
$G$ is a  group,  $f \in {\rm Aut}(G)$,
and the quandle operation is defined by 
$x*y=f(xy^{-1}) y $.  
If $G$ is abelian, this is called an {\it Alexander quandle}.

Let $X$ be a rack.
For brevity we sometimes omit $*$ and parentheses, 
so that for $x_i \in X$, $x_1x_2=x_1 * x_2$, 
$x_1 x_2 x_3 = (x_1 x_2)x_3$, and inductively, 
$x_1 \cdots x_{k-1} x_k=(x_1 \cdots x_{k-1} ) x_k$. 

We  also use the notation $x*^n y = x*y *\cdots * y $ where $y$ is repeated $n$ times.
A rack $X$ is  said to be of  {\it type $n$} (cf.~\cite{Joyce2})
 if $n$ is the least  positive integer such that  $x *^n y=x$ holds for all $x, y \in X$,
and we write ${\rm type}(X)=n$. A type 1 quandle is called {\it trivial}, and a 
type 2 quandle is called a {\it kei} or an {\it involutory } quandle. 

The subgroup of ${\rm Sym}(X)$ generated by the permutations ${ R}_a$, $a \in X$, is 
called the {\it inner automorphism group} of $X$,  and is 
denoted by ${\rm Inn}(X)$. 
A rack is {\it connected} if ${\rm Inn}(X)$ acts transitively on $X$.

The rack chain group $C_n(X)=C^R_n(X)$ for a rack $X$ is defined to be the free abelian group generated by 
$n$-tuples $(x_1, \ldots, x_n)$, $x_i \in X$ for $i=1, \ldots, n$.
Let $d_h^{(n)}, \delta _h^{(n)} : C_n(X) \rightarrow C_{n-1}$ be defined by 
\begin{eqnarray*}
d_h ^{(n)} (x_1, \ldots, x_h, \ldots, x_n) &=&   (x_1, \ldots, \widehat{x_h},  \ldots, x_n),   \\
\delta _h^{(n)}  (x_1, \ldots, x_h, \ldots, x_n) &=&  (x_1 * x_h , \ldots, x_{h-1} * x_h , \widehat{x_h}, \ldots, x_n), 
\end{eqnarray*}
respectively, where $\hat{ \ } $ denotes deleting the entry.
Then the boundary map is defined by $\partial_n=\sum_{h=2}^{n} (-1)^h [ d_h^{(n)} - \delta _h^{(n)} ] $.
The degeneracy subcomplex $C^D(X)$ was defined   \cite{CJKLS}  for a quandle $X$ with generating terms
$(x_i)_{i=1}^n \in C_n(X) $ with $x_j=x_{j+1}$ for some $j=1, \ldots, n-1$,
and the quotient complex $\{ C^Q_n(X)=C^R_n(X) / C^D_n(X), \partial_n \}$ was defined \cite{CJKLS}
as the quandle homology.

The corresponding $2$-cocycle is formulated as follows.
  A  quandle $2$-cocycle  is regarded as  a function $\phi: X \times X \rightarrow A$ for an abelian group $A$
  that satisfies $$ \phi (x, y)-
    \phi(x,z)+ \phi(x*y, z) - \phi(x*z, y*z)=0$$ 
    for any $x,y,z \in X$ and 
    $\phi(x,x)=0$ for any $x\in X$. For a quandle $2$-cocycle $\phi$, $E=X \times A$ becomes a quandle by
    \[
    (x, a) * (y, b)=(x*y, a+\phi(x,y))
    \]
    for $x, y \in X$, $a,b \in A$, denoted by
    $E(X, A, \phi)$ or simply $E(X, A)$, and it is called an \emph{abelian  extension} of $X$ by $A$. 
    The second factor, in this case, is written in additive notation of $A$. See \cite{CENS,CKS} for more details.

Computations using \textsf{GAP}~\cite{Leandro} significantly expanded the list
of small connected quandles.
These
quandles, called {\it Rig} quandles,
may be found in the \textsf{GAP}~package Rig \cite{rig}.  
Rig includes all  connected quandles of order less than 48, at this time. 
Properties of some of Rig quandles,  such as homology groups and cocycle invariants,
are also found in  \cite{rig}.
We  use the notation $Q(n,i)$
for the $i$-th quandle of order $n$ in the list of Rig quandles, denoted in \cite{rig} by
{\sf SmallQuandle}$(n,i)$.
Note, however, that in \cite{rig} quandles are left distributive, so that as a matrix,  $Q(n,i)$
is the transpose of the quandle matrix {\sf SmallQuandle}$(n,i)$ in
\cite{rig}.

\section{Type $3$ quandles}\label{sec:type3}

Before presenting the main theorem and proof,  we describe  the properties of rack identities 
through the example of type $3$ quandles in this section.
We note that a subcomplex for type 2 quandles, or keis, is defined in \cite{KO}
as a special case of their subcomplex.
Recall that a rack $X$ is of type 3 if it satisfies  the identity $S$: $x*y*y*y=x$ for any $x, y \in X$. 
We observe the following three properties.

(i)  From this identity $S$ we form  a $2$-chain 
$$L=L_S=(x, y)+(x * y, y) + (x*y*y, y).$$
 It is checked that $L$ is a $2$-cycle:
$$\partial (L)=[\ (x) - (x  *  y) \ ] + [ \ (x  *  y) - (x  *  y  *  y ) \ ] + [ \ (x  *  y*y ) - (x  *  y  *  y *y) \ ] = 0, $$
using the identity $S$.

(ii)
Let $\phi \in Z^2_R(X, A)$ be a rack $2$-cocycle with the coefficient abelian group $A$ such that $\phi(L)=0$. 
Then for $E(X,A,\phi)=X \times A$, one computes
\begin{eqnarray*}
\lefteqn{(x, a)*(y,b)*(y,b)*(y,b)}\\
&=& 
(x*y, a+\phi(x,y) )*(y,b)*(y,b) \\
&=& (x*y*y*y, a+\phi(x,y) +\phi(x*y,y)+  \phi(x*y*y,y) ) \\
&=& (x,a). 
\end{eqnarray*}
Hence $E(X,A,\phi)$ is of type 3.

(iii)
Define, for each $n$, a subgroup $C^S_n(X) \subset C_n(X)$ generated by 
\begin{eqnarray*}
\bigcup_{j=1}^{n-1} \{ \  & & (x_1, \ldots, x_j, y, x_{j+2}, 
\ldots, x_n) \\
&+& (x_1 * y  , \ldots, x_j*y, y, x_{j+2} , \ldots, x_n) \\
&+&  (x_1 * y *y  , \ldots, x_j*y * y , y, x_{j+2} , \ldots, x_n) \  \\
& & | \ x_i, y \in X, \, i=1, \ldots, \widehat{j+1}, \ldots, n \ \} . 
\end{eqnarray*}
For a fixed $j$, $y$ is positioned at $(j+1)$-th entry. 
Then $\{ C_n^S, \partial_n \}$ is a subcomplex, which will be proved in the general case in Section~\ref{sec:main}.
In this section, to illustrate the idea of the proof, we compute the image under the boundary map for 
 a specific generator of $C^S_4(X)$.
We simplify  the notation and use $(1,2,3,4)$ for 
$(x_1, x_2, x_3, x_4)$, $12$ for $x_1*x_2$, etc. 
Let $c=(1,2,3,4) + (13, 23, 3, 4) + (133, 233, 3, 4)$ that is a generator for  the case $n=4$ and $j=2$.
We compute $\partial_4 (c)$ and show that the image is in $C_3^S(X)$. 
First we compute 
$$d_2^{(4)} (c) =  (1,3,4)+(13,3,4)+(133,3,4) 
$$
which is a generator of  $C_3^S(X)$.
Then we have 
$$
\delta_2^{(4)}(c) = (12,3,4) + ((13)(23), 3,4) + ((133)(233), 3,4) .
$$
One computes that $(13)(23)=(12)3$, and 
$$(133)(233)=[ ( 13) 3 ] [ ( 23 ) 3 ] = [ ( 13) (23) ] 3 = [ (12) 3 ] 3, $$
so that 
$\delta_2^{(4)}(c)=  (12, 3, 4) + (123,3,4) + (1233,3,4), $
which is   a generator of $C_3^S(X)$.
We also compute 
\begin{eqnarray*}
[ d_3^{(4)} -  \delta_3^{(4)}] (c) &=&
[ (1,2,4) - (13,23,4)] + 
[(13,23,4)-(133,233,4) ] \\
& & \quad +\  [ (133,233,4)- (1,2,4)]\ =\ 0,
\end{eqnarray*}
in this case.
Next  we have
$d_4^{(4)} (c)=(1,2,3)+(13,23,3) + (133, 233, 3)$ which is a generatior.
Finally we have 
\begin{eqnarray*}
 \delta_4^{(4)} (c) &=&  (14, 24, 34) + (134, 234, 34) + (1334, 2334, 34) \\
 &=& (14,24,34) + ((14)(34), (24)(34), 34) + ((14)(34)(34), (24)(34)(34), 34) 
 \end{eqnarray*}
 which is a generator.
This concludes the computation that  $\partial_4 (c) \in C_3^S(X)$.

\section{From identities to extensions and subcomplexes}\label{sec:main}

Let $X$ be a rack.
For brevity we omit $*$ and take the left-most parenthesis as before. 
Fix a surjection  $\tau: \{ 1, \ldots, k \} \rightarrow \{ 1, \ldots, m \}$, where $k \geq m$ are positive integers.
We consider identities of the form 
$x y_{\tau(1)} \cdots y_{\tau(k)} =x$ for $x, y_{\tau(i)} \in X$, $i=1, \ldots, m$.
The expression $y_{\tau(1)} \cdots y_{\tau(k)}$ is a word of length $k$ from the alphabet 
$\{ y_1, \ldots, y_m \}$. 
We assume that $k>1$, since otherwise the quandle is trivial.

For example, for a type $n$ rack $X$, there is an identity  of the form
$ x \, \underbrace{y_1 \cdots y_1}_{k} = x $ for any $x, y_1 \in X$. 
Another example is $ x  \underbrace{y_1 y_2\cdots y_1 y_2}_{2k} = x $.

\begin{definition}
{\rm 

We call 
an identity $S$ of the form $x y_{\tau(1)} \cdots y_{\tau(k)} =x$ as described above
a $(\tau, k, m)$ 
{\it  inner identity}.

If an inner identity $S$ above holds  for any $x, y_j \in X$, $j=1, \ldots, m$,
then we say that $X$ satisfies the $(\tau, k, m)$ 
inner identity $S$.
}
\end{definition}

A rack $X$ satisfies a $(\tau, k,m)$ inner identity $S$ 
 of the form $x y_{\tau(1)} \cdots y_{\tau (k)} =x$ if and only if 
$  R_{y_{\tau(k)}} \cdots R_{y_{\tau(2)}} R_{y_{\tau(1)}} = {\rm id}  \in  {\rm Inn}(X)$
for all $y_j \in X$, $j=1, \ldots, m$.

\begin{definition}
{\rm

Let $S$ be an inner identity  $x  y_{\tau(1)} \cdots y_{\tau (k)} =x$.
Set $ \omega_i = y_{\tau(1)} \cdots y_{\tau(i)} $, when $i > 0$. 
Define a $2$-chain $L_S$ by 
\begin{eqnarray*}
L_S & = &  (x, y_{\tau(1)}) +  \sum_{i=1}^{k-1} ( x y_{\tau(1)}  \cdots   y_{\tau(i)}, y_{\tau(i+1)} )  \\
&=& (x, \omega_1) +  \sum_{i=1}^{k-1} ( x \omega_i , y_{\tau(i+1)} ) \\
&=&  (x, \omega_1) + (x \omega_1, y_{\tau(2)}) + \cdots + (x  \omega_{k-1},  y_{\tau (k)} )  .
\end{eqnarray*}
}
\end{definition}

\begin{definition}
{\rm 
Let $S$: $x  y_{\tau(1)}\cdots y_{\tau(k)}=x$ be an inner identity.
Set $ \omega_i = y_{\tau(1)} \cdots y_{\tau(i)} $, when $i > 0$. 
 Let $C^S_n (X) \subset C_n(X)$, $n \in \mathbb{Z}$, be subgroups generated by
\begin{eqnarray*}
 \lefteqn{ 
   \bigcup_{j=1}^{k-1} \ 
 \{ \  
    (x_1, \ldots, x_j, y_{\tau(1)}, x_{j+2}, \ldots, x_n)  
    }\\
  & &   + \sum_{i=1}^{k-1} (x_1 \omega_i , \ldots, x_j \omega_i, y_{\tau(i+1)}, x_{j+2}, \ldots, x_n )
  \\
& & \ | \ 
x_h, y_{\tau(i)} \in X,\,  h=1, \ldots, \widehat{j+1}, \ldots, n, \,  i=1, \ldots, k-1
\ \} . 
\end{eqnarray*}
}
\end{definition}

We use the following lemma in the proof of Theorem~\ref{thm:main}.

\begin{lemma} \label{lem:reduce}
Let $X$ be a quandle.
\begin{itemize}
\item[{\rm (i)}]
For any $a, b, c_i \in X$,  it holds that 
$(a c_1 \cdots c_i )( b c_1 \cdots c_i ) = (ab) c_1 \cdots c_i $.
\item[{\rm (ii)}]
For any $a_i,  b \in X$,  it holds that 
$a_1 \cdots a_i b = (a_1 b) \cdots (a_i b)$. 
\end{itemize}

\end{lemma}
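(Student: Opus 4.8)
The plan is to treat each part as an iterated form of the self-distributivity axiom (2), namely $(a*b)*c=(a*c)*(b*c)$, and to induct on $i$. Idempotency plays no role here, so in fact both identities hold in any rack; I would note this but it is not needed for the application.

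For part (i), I would induct on $i$, taking the base case $i=1$ to be the axiom (2) itself, since $(ac_1)(bc_1)=(ab)c_1$ is precisely $(a*c_1)*(b*c_1)=(a*b)*c_1$ (one may instead start at $i=0$, where the claim is the tautology $ab=ab$). For the inductive step, write $A=ac_1\cdots c_i$ and $B=bc_1\cdots c_i$ and set $c=c_{i+1}$. The left-hand side for $i+1$ is then $(Ac)(Bc)$, and axiom (2) gives $(Ac)(Bc)=(AB)c$. The inductive hypothesis says $AB=(ab)c_1\cdots c_i$, whence $(AB)c=(ab)c_1\cdots c_{i+1}$, which is exactly the right-hand side. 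The only thing to keep straight is that passing from $i$ to $i+1$ amounts to appending one further right translation $R_{c_{i+1}}$ to both sides.

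For part (ii), I would again induct on $i$, with the base case $i=1$ being the trivial $a_1b=(a_1b)$. For the step, set $P=a_1\cdots a_i$, so that the left-hand side for $i+1$ is $(Pa_{i+1})b$. Applying axiom (2) in the splitting direction, $(Pa_{i+1})b=(Pb)(a_{i+1}b)$. Now $Pb=a_1\cdots a_i b$, which by the inductive hypothesis equals the left-associated product $(a_1b)\cdots(a_ib)$; appending the single letter $(a_{i+1}b)$ then yields $(a_1b)\cdots(a_{i+1}b)$, the right-hand side. Here the subtlety to watch is the left-most parenthesization convention: the right-hand side must be read as a left-associated product whose letters are the elements $a_jb=a_j*b$, so that appending $(a_{i+1}b)$ is precisely applying $R_{a_{i+1}b}$ to the previous product.

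Neither part presents a genuine obstacle; the content is entirely bookkeeping, as both statements are ``telescoped'' versions of a single application of self-distributivity. The one place demanding care is the uniform application of the parenthesization convention, especially in (ii), where axiom (2) is used as $(u*v)*b=(u*b)*(v*b)$ with $u=P$ and $v=a_{i+1}$, and one must verify that the resulting expression genuinely coincides with the left-associated product claimed on the right rather than some other bracketing.
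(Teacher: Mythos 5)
Your proof is correct and is essentially the same as the paper's: both parts are proved by induction on $i$, applying the self-distributivity axiom $(u*v)*c=(u*c)*(v*c)$ once per step (in the collapsing direction for (i), the splitting direction for (ii)), with the paper merely writing the induction as a top-down chain of equalities rather than your bottom-up formulation. Your side remark that the argument uses only self-distributivity, hence holds in any rack, is apt, since the paper applies the lemma in Theorem~\ref{thm:main}, which is stated for racks.
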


\begin{proof}
(i) One computes inductively, using self-distributivity,  
\begin{eqnarray*}
(a c_1 \cdots c_i )( b c_1 \cdots c_i ) &=& [ ( a c_1 \cdots c_{i-1} ) c_i ] [ ( b c_1 \cdots c_{i-1} ) c_i ] \\
 & = & [ ( a c_1 \cdots c_{i-1} )  ( b c_1 \cdots c_{i-1} ) ] c_i  \\
 & = & [ [  ( a c_1 \cdots c_{i-2} )  ( b c_1 \cdots c_{i-2} ) ] c_{i-1}  ]  c_i  \\
 & =& \cdots \\
 &=&  (ab) c_1 \cdots c_i .
\end{eqnarray*}
(ii) One computes inductively 
\begin{eqnarray*}
a_1 \cdots a_i b 
&=& 
(a_1 \cdots a_{i-1} b ) ( a_i b ) \\
& = & 
[ (a_1 \cdots a_{i-2} b ) (a_{i-1} b) ]  ( a_i b ) \\
& =& \cdots \\
 &=&
(a_1 b) \cdots (a_i b) 
 \end{eqnarray*}
as desired.
\end{proof}

\begin{figure}[htb]
    \begin{center}
   \includegraphics[width=4.5in]{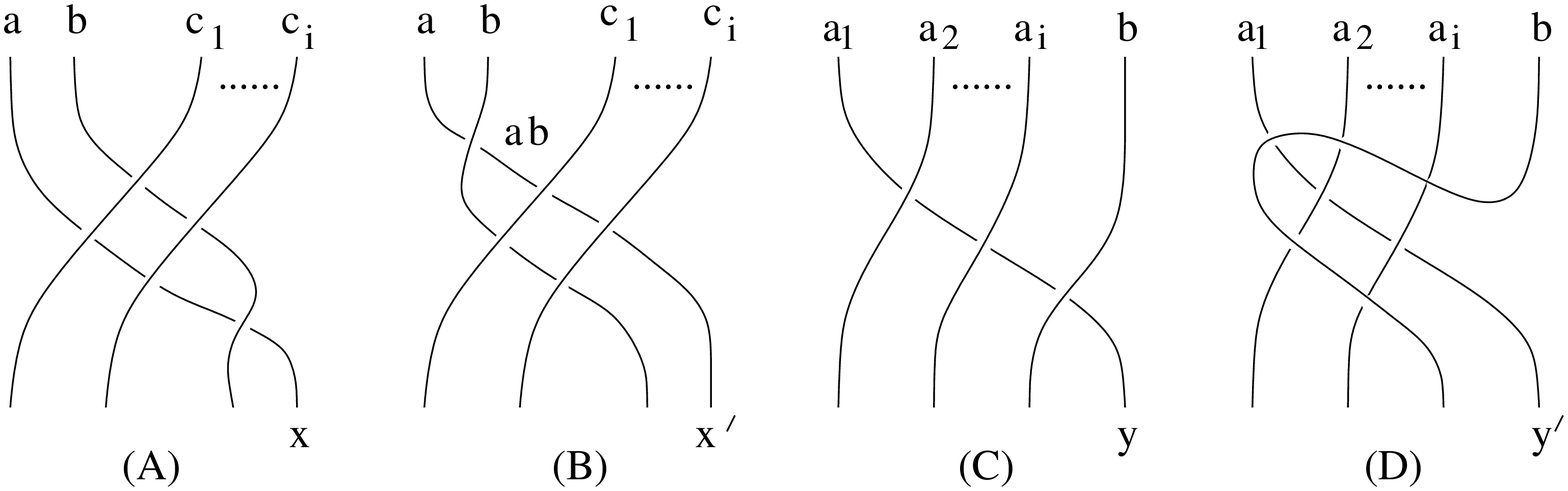}\\
    \caption{ Diagrams for Lemma~\ref{lem:reduce} }\label{lemma}
    \end{center}
\end{figure}

\begin{remark}
{\rm

The proof of Lemma~\ref{lem:reduce} has an diagrammatic interpretation as depicted in Figure~\ref{lemma}. 
Colorings of knot diagrams by quandles are well known and extensively used to construct knot invariants.
At a crossing, for  positive crossings in the figure with all arcs oriented downwards, 
the coloring condition is as depicted at the top left crossing
of Figure~\ref{lemma} (B), where $a$ and $b$ are assigned on the left top under-arc and the over-arc, respectively,
and $a*b$ (simply denoted by $ab$) is required to be assigned on the other under-arc.
Then the bottom right arc of (A) receives $x=(a c_1 \cdots c_i )( b c_1 \cdots c_i ) $.
In (B) the corresponding arc receives $x'=(ab) c_1 \cdots c_i $. 
Thus the fact that the colorings are in bijection under Reidemeister moves shows the equality (i). 
Similarly, (C) and (D) represents the equality (ii), with 
$y=a_1 \cdots a_i b $ and $y'=(a_1 b) \cdots (a_i b) $.
One also sees that the inductive calculations in the proof can be represented by step by step moves.
}
\end{remark}

\begin{theorem}\label{thm:main}
\begin{sloppypar}
Let $X$ be a rack.
Let $S$ be a $(\tau, k, m)$ inner identity  $x  y_{\tau(1)}   \cdots   y_{\tau(k)}=x $ that $X$ satisfies.
Then the following holds.
\end{sloppypar}

\begin{itemize}
\item[{\rm (i)}] The $2$-chain $L_S$ is a $2$-cycle, $L_S \in Z_2(X)$.

\item[{\rm (ii)}] For an abelian group $A$ and a $2$-cocycle $\phi$,  $E(X,A, \phi)$ satisfies $S$ if and only if $\phi(L_S)=0$. 

\begin{sloppypar}
\item[{\rm (iii)}] The sequence of subgroups $C^S_n (X) \subset C_n(X)$ form a subcomplex $\{ C^S_n (X), \partial_n \}$,  $n \in \mathbb{Z}$.
\end{sloppypar}

\end{itemize}
\end{theorem}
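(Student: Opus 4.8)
The plan is to handle the three parts in turn; parts (i) and (ii) are the $n=2$ shadows of the mechanism that will drive (iii). For (i) I would compute $\partial_2(L_S)$ directly. Since $\partial_2(a,b)=(a)-(a*b)$, the leading term $(x,\omega_1)$ contributes $(x)-(x\omega_1)$ and each summand $(x\omega_i,y_{\tau(i+1)})$ contributes $(x\omega_i)-(x\omega_{i+1})$, using $\omega_i y_{\tau(i+1)}=\omega_{i+1}$. The sum telescopes to $(x)-(x\omega_k)$, and the identity $S$ (that $x\omega_k=x$) closes it to $0$. For (ii) I would iterate the extension product: multiplying $(x,a)$ successively by $(y_{\tau(1)},b_1),\dots,(y_{\tau(k)},b_k)$, the first coordinate runs through $x\omega_1,\dots,x\omega_k$ while the second accumulates precisely the terms of $\phi(L_S)$, the $b_i$ never entering since $\phi$ reads only first coordinates. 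The product is therefore $(x\omega_k,\,a+\phi(L_S))=(x,\,a+\phi(L_S))$ by $S$, so $E(X,A,\phi)$ satisfies $S$ for all inputs exactly when $\phi(L_S)=0$ identically.

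For (iii) it suffices to show $\partial_n g\in C^S_{n-1}(X)$ for a single generator $g$ of $C^S_n(X)$, the one indexed by $j$ whose moving letter sits in position $j+1$. I would write $g=\sum_{i=1}^k T_i$, where $T_i$ has first $j$ entries $x_l\omega_{i-1}$ (with $\omega_0$ empty), $(j+1)$-st entry $y_{\tau(i)}$, and tail $x_{j+2},\dots,x_n$, and then expand $\partial_n g=\sum_{h=2}^n(-1)^h(d_h-\delta_h)g$, splitting the range of $h$ into three blocks according to its position relative to $j+1$. For a left face $h\le j$, $d_h g$ is already a generator with index $j'=j-1$, and $\delta_h g$ becomes one after Lemma~\ref{lem:reduce}(i) rewrites $(x_l\omega_{i-1})(x_h\omega_{i-1})=(x_l x_h)\omega_{i-1}$. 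For a right face $h\ge j+2$, $d_h g$ is a generator with index $j'=j$, and $\delta_h g$ becomes one after Lemma~\ref{lem:reduce}(ii) rewrites $(x_l\omega_{i-1})x_h=(x_l x_h)\omega_{i-1}'$, with $\omega'$ built from the acted letters $y_{\tau(s)}x_h$; here I would note that $R_{x_h}$ is a bijection, so these letters still range over all of $X$ and the image is an honest generator.

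The crux, where I expect the real work, is the middle block $h=j+1$, in which the face hits the moving letter; this is the exact analogue of (i). From $(x_l\omega_{i-1})y_{\tau(i)}=x_l\omega_i$ one gets $\delta_{j+1}(T_i)=d_{j+1}(T_{i+1})$ for $1\le i\le k-1$, so $(d_{j+1}-\delta_{j+1})g$ telescopes to $d_{j+1}(T_1)-\delta_{j+1}(T_k)$. The first equals $(x_1,\dots,x_j,x_{j+2},\dots,x_n)$, the second equals $(x_1\omega_k,\dots,x_j\omega_k,x_{j+2},\dots,x_n)$, and these agree because $S$ gives $x_l\omega_k=x_l$; hence this block contributes $0$. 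Every other block contributes a difference of two $C^S_{n-1}$-generators, so summing over $h$ yields $\partial_n g\in C^S_{n-1}(X)$ and the subcomplex claim. The main obstacle is purely the bookkeeping: tracking how the index shifts ($j\mapsto j-1$ on a left face, $j\mapsto j$ on a right face) and verifying through the two parts of Lemma~\ref{lem:reduce} that each $\delta_h$-image is genuinely in generator form rather than merely resembling one.
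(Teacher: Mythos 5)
Your proposal is correct and follows essentially the same route as the paper: the telescoping computation for (i), the iterated extension product for (ii), and the three-block split of $\partial_n$ at $h\le j$, $h=j+1$, $h\ge j+2$ for (iii), with Lemma~\ref{lem:reduce}~(i) handling the left faces, Lemma~\ref{lem:reduce}~(ii) the right faces, and the identity $S$ closing the telescoped middle block. Your extra remark that $R_{x_h}$ is a bijection is harmless but unnecessary (one only needs the image to be of generator form for some choice of elements, which the substitutions $x'_\ell=x_\ell x_h$, $y'_s=y_s x_h$ provide); otherwise the two arguments coincide.
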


\begin{proof}
(i) Set $ \omega_i = y_{\tau(1)} \cdots y_{\tau(i)} $ when $i > 0$ as before.
One computes 
\begin{eqnarray*}
\partial  \left( (x, \omega_1) +  \sum_{i=1}^{k-1} ( x \omega_i , y_{\tau(i+1)} ) \right) 
 &=& (x) + \sum_{i=1}^{k-1}\  [ \ ( x   \omega_i )  - (  x   \omega_{i+1}) \ ] \\
 &=& (x) - (x  \omega_k  )  \  = \ 0 
\end{eqnarray*}
as desired.

\noindent 
(ii) For  $(y_{\tau(i)}, a_i) \in E=X \times A$, $i=0, \ldots, k$, one computes, inductively, 
\begin{eqnarray*}
\lefteqn{   (x, a_0)      \cdots     (y_{\tau(k)} , a_k) }  \\
 &=&   (x     y_{\tau(1)}, a_0 + \phi (x, y_{\tau(1)}) )      \cdots     (y_{\tau(k)} , a_k) \\
  &=&  (x     y_{\tau(1)}     y_{\tau(2)}, a_0 + \phi (x, y_{\tau(1)}) + \phi (x     y_{\tau(1)}, y_{\tau(2)})  )     \cdots     (y_{\tau(k)} , a_k)\\
  &=&  \cdots \\
 &=&  (\ x    \omega_k , a_0 + ( \ \phi (x, \omega_1)   + \sum_{i=0}^{k-1}  \phi (x \omega_i , y_{\tau(i+1)})  \ ) \ ) ,
\end{eqnarray*}
which is equal to $(x, a_0)$ if and only if $\phi(L_S)=0$, as desired. 

\noindent
(iii) We check the following three cases.

Case (1):  $h \leq  j $.
In this case, each term of 
\begin{eqnarray*}
\lefteqn{ 
d_h ^{(n)} ( \  (x_1, \ldots, x_j, y_{\tau(1)}, x_{j+2}, \ldots, x_n) 
} \\
& & 
 +  \sum_{i=1}^{k-1}
( x_1  \omega_i , \, 
\ldots, 
x_{j-1} \omega_{i}, \,  
x_j  \omega_i, \  
y_{\tau(i+1)}, \, x_{j+2},  \ldots, x_n) \ )
\end{eqnarray*}
is obtained from the original term by deleting the $h$-th entry, 
hence the image is an element of $C^S_{n-1} (X)$.
Each term of 
\begin{eqnarray*}
\lefteqn{ 
\delta_h ^{(n)} ( \  (x_1, \ldots, x_j, y_{\tau(1)}, x_{j+2}, \ldots, x_n) 
} \\
& & 
+  \ \sum_{i=1}^{k-1}
( x_1 \omega_i  ,\,  
\ldots, x_{j-1}  \omega_i , \,   
x_j   \omega_i , \, 
 y_{\tau(i+1)}, \, x_{j+2},  \ldots, x_n) \ )
\end{eqnarray*}
is obtained from the original by replacing the first $h$ entries of the form $ x_\ell \omega_i$ 
by 
$$ ( x_\ell \omega_i ) (x_\ell \omega_i) =
( x_\ell   y_{\tau(1)}   \cdots    y_{\tau(i)} ) (  x_h  y_{\tau(1)}   \cdots    y_{\tau(i)} ) . $$
By Lemma~\ref{lem:reduce} (i), we obtain 
$$( x_\ell   y_{\tau(1)}   \cdots    y_{\tau(i)} ) (  x_h  y_{\tau(1)}   \cdots    y_{\tau(i)} ) = (x_\ell  x_h )   y_{\tau(1)}   \cdots    y_{\tau(i)} .$$
Hence the image is in $C^S_{n-1} (X)$.

Case (2): $h=j+1$. 
One computes   
\begin{eqnarray*}
\lefteqn{
d_{j+1} ^{(n)} ( x_1 \omega_i, \,   
\ldots, x_{j}   \omega_i, \, 
y_{\tau(i+1)}, \, x_{j+2},  \ldots, x_n) }\\
&=&
( x_1 \omega_i, \,  
\ldots, x_{j}   \omega_i, \, 
\widehat{y_{\tau(i+1)}},  \, x_{j+2},  \ldots, x_n) 
\end{eqnarray*} 
for the $i$-the term, and 
\begin{eqnarray*}
\lefteqn{
 \delta_{j+1} ^{(n)} ( x_1   \omega_{i-1}, \, 
  \ldots, x_{j-1}  \omega_{i-1}, \, 
   x_j  \omega_{i-1}, \, 
    y_{\tau(i)}, \,  x_{j+2},  \ldots, x_n) ) 
 }\\
&=&
( x_1   \omega_{i}, \, 
  \ldots, x_{j-1}  \omega_{i}, \, 
   x_j  \omega_{i}, \, 
   \widehat{y_{\tau(i)}},    \,  x_{j+2},  \ldots, x_n)
\end{eqnarray*} 
for the $(i-1)$-th term, so that these terms cancel in pairs by opposite signs. 
The first term before the sum over $i$ and the last term of the sum over $i$, 
$$d_{j+1} ^{(n)} ( x_1  , \ldots, x_{j-1}  , x_j , y_{\tau(1)}, x_{j+2},  \ldots, x_n) 
=( x_1  , \ldots, x_{j-1}  , x_j ,  x_{j+2},  \ldots, x_n) 
$$ 
and 
\begin{eqnarray*}
\lefteqn{
\delta_{j+1} ^{(n)} 
( x_1 \omega_{k-1}, \, 
\ldots, x_{j-1}  \omega_{k-1}, \,  
x_j  \omega_{k-1} , \, 
 y_{\tau(k)}, \, x_{j+2},  \ldots, x_n)
} \\
&=& 
( x_1  \omega_{k}, \, 
\ldots, x_{j-1}  \omega_{k}, \,  
x_j  \omega_{k} , \,
\widehat{y_{\tau(k)}}, x_{j+2},  \ldots, x_n)
\end{eqnarray*}
are equal and cancel by opposite signs.
Hence the image of this case is zero.

Case (3): $h>j+1$.
Each term of 
\begin{eqnarray*}
\lefteqn{
d_h ^{(n)} (\  (x_1, \ldots, x_j, y_{\tau(1)}, x_{j+2}, \ldots, x_n) 
} \\
& & 
 + \sum_{i=1}^{k-1}
( x_1  \omega_i, \, 
\ldots, x_{j-1} \omega_i, \,  
x_j   \omega_i, \,  
y_{\tau(i+1)},\,  x_{j+2},  \ldots, x_n) \ )
\end{eqnarray*}
is obtained from the original term by deleting the $h$-th entry, 
hence the image is an element of $C^S_{n-1} (X)$.
Each term of 
\begin{eqnarray*}
\lefteqn{
 \delta_h ^{(n)} (\  (x_1, \ldots, x_j, y_{\tau(1)}, x_{j+2}, \ldots, x_n) 
 }
\\
& & 
  + \sum_{i=1}^{k-1}
( x_1   \omega_i, \,  
\ldots, x_{j-1}   \omega_i, \, 
x_j    \omega_i, \,  
y_{\tau(i+1)},  x_{j+2},  \ldots, x_n) \ )
 \quad (*)
\end{eqnarray*}
is computed as 
$$
( x_1   \omega_i,   
 x_h , \, \ldots,  x_j   \omega_i, 
  x_h , \, y_{\tau(i+1)}  x_h, \, x_{j+2}  x_h,  \ldots , x_{h-1}  x_h, \widehat{x_h}, x_{h+1}, \ldots,  x_n)   .
$$
By Lemma~\ref{lem:reduce} (ii), we obtain
$$ x_\ell \omega_i x_h =  x_\ell  y_{\tau(1)}   \cdots    y_{\tau(i)}   x_h = 
(x_\ell   x_h)   (y_{\tau(1)}   x_h)   \cdots    (y_{\tau(i)}   x_h) .
$$
We note that it holds that if  $y_{\tau(u)}  = y_{\tau(v)}$ then  $y_{\tau(u)}   x_h = y_{\tau(v)}    x_h$.
Hence we can set $y'_{\tau(j)}=y_{\tau(j)} x_h \in X$ for $j=1, \ldots, k$, and $x'_\ell=x_\ell x_h$,  then 
$$(x_\ell   x_h)   (y_{\tau(1)}   x_h)   \cdots    (y_{\tau(i)}   x_h) =x'_\ell \,  y'_{\tau(1)}  \cdots  y'_{\tau(i)} , $$
so that the above sum $(*)$ is 
 an element of $C^S_{n-1} (X)$ as desired.
\end{proof}

The construction of 2-cycles in (i) is a generalization of   \cite{Zab}.
The following are immediate corollaries of Theorem~\ref{thm:main} for specific identities.

\begin{corollary}
Let $X$ be a type $n$ quandle, with the identity  $S$: $x *^n y=x$ for all $x, y \in X$ for a fixed $n >1$. 
Then the following properties hold.
\begin{itemize}
\item[{\rm (i)}] The $2$-chain $L_S = \sum_{i=0}^{k-1} ( x *^i  y, y )$ is a $2$-cycle, $L_S \in Z_2^R(X)$.

\item[{\rm (ii)}] For an abelian group $A$ and a $2$-cocycle $\phi$,  $E(X,A, \phi)$ satisfies $S$ if and only if 
$\phi( L_S)=0$
for any element $x, y \in X$. 

\item[{\rm (iii)}] 
Let $C^S_n(X)$ be the subgroup of $C_n(X)$
generated by 
$$
\bigcup_{j=0}^{n-1} \{ \sum_{i=0}^{k-1} 
( x_1*^i  y, \ldots,  x_j *^i y , y, x_{j+2},  \ldots, x_n)  \} .
$$
Then the sequence of  subgroups  $\{ C^S_n (X), \partial_n \}$ is a subcomplex.  
\end{itemize}
\end{corollary}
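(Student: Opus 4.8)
The plan is to obtain the Corollary as a direct specialization of Theorem~\ref{thm:main}, so the work is almost entirely a matter of recognizing the type~$n$ identity as one particular inner identity and then translating each of the three conclusions through the substitution that this choice dictates. First I would record that the identity $S$: $x *^n y = x$ is exactly the $(\tau,k,m)$ inner identity with $m=1$, $k=n$, and $\tau\colon\{1,\ldots,n\}\to\{1\}$ the constant surjection. With only one letter $y_1=y$ in the alphabet one has $y_{\tau(i)}=y$ for every $i$, and consequently $\omega_i = y_{\tau(1)}\cdots y_{\tau(i)} = y\cdots y$ ($i$ factors), so that $x\omega_i = x*^i y$ and, more generally, $x_\ell \omega_i = x_\ell *^i y$. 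Since $X$ is a type~$n$ quandle, $R_y^{\,n} = \mathrm{id}$ holds for all $y\in X$, which by the criterion stated just after the definition of inner identities is precisely the assertion that $X$ satisfies this $S$; hence Theorem~\ref{thm:main} applies and each of its three parts can be read off.

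With applicability secured, parts (i) and (ii) follow by substitution. For (i), the general cycle $L_S = (x,\omega_1) + \sum_{i=1}^{k-1}(x\omega_i, y_{\tau(i+1)})$ becomes $(x,y) + \sum_{i=1}^{n-1}(x*^i y, y)$; absorbing the leading term as the $i=0$ summand (using $x*^0 y = x$) rewrites this as $\sum_{i=0}^{n-1}(x*^i y, y)$, which is the stated $L_S$, and Theorem~\ref{thm:main}(i) already guarantees $L_S \in Z_2^R(X)$. Part (ii) is just the specialization of Theorem~\ref{thm:main}(ii) to this same $L_S$, so nothing further is required there.

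For (iii) I would substitute $\omega_i = y*^i$ and $y_{\tau(i+1)}=y$ into the general generators of $C^S_n(X)$: the generator $(x_1,\ldots,x_j,y,x_{j+2},\ldots,x_n) + \sum_{i=1}^{n-1}(x_1*^i y,\ldots,x_j*^i y, y, x_{j+2},\ldots,x_n)$ collapses, again by the $i=0$ absorption, to $\sum_{i=0}^{n-1}(x_1*^i y,\ldots,x_j*^i y, y, x_{j+2},\ldots,x_n)$, which is the generating set displayed in the Corollary. Since Theorem~\ref{thm:main}(iii) asserts that these generators span a subcomplex $\{C^S_n(X),\partial_n\}$, the claim follows. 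I expect no substantive obstacle here, as there is no new homological content beyond Theorem~\ref{thm:main}; the only place care is genuinely needed is the bookkeeping, namely verifying that the constant $\tau$ really makes $\omega_i$ equal to $i$ copies of $y$, that the leading term of each general expression merges with the sum to yield the uniform range $0\le i\le n-1$, and that the position index $j$ is reconciled correctly between the general statement and the Corollary's notation.
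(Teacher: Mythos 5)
Your proposal is correct and matches the paper's approach exactly: the paper offers no separate argument, stating only that this is an ``immediate corollary'' of Theorem~\ref{thm:main} for the specific identity $x *^n y = x$, which is precisely your specialization to the constant surjection $\tau$ with $m=1$, $k=n$, so that $x\omega_i = x *^i y$. Your bookkeeping (absorbing the leading term as the $i=0$ summand, and reconciling the index ranges) is the only content such a proof requires, and it is done correctly.
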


The following is motivated from Burnside relations discussed in  \cite{NP}. 
We consider the identity $xw=x$ for 
$$w= \underbrace{y_1 *y_2 * y_1 * y_2 * \cdots *  y_1 * y_2}_{k\  {\rm repetitions}} . $$
For simplicity denote $w$ by $Y^k$ where $Y$ denotes $y_1 y_2$ and the exponent $k$ represents the number of repetitions
of $y_1 y_2$ (but each $y_1 y_2$ is not parenthesized).

\begin{corollary}
Let $X$ be a  quandle that  satisfies the identity $S$: $xw=x$ for $w=Y^k$ as above,
 for all $x, y_1, y_2 \in X$  for a fixed $k >1$. 
Then the following properties hold.
\begin{itemize}
\item[{\rm (i)}] The $2$-chain 
$L_S = \sum_{i=0}^{k-1} \ [ \  ( x Y^i, y_1) + (xY^i y_1, y_2) \ ] $ is a $2$-cycle, $L_S \in Z_2^R(X)$.

\item[{\rm (ii)}] For an abelian group $A$ and a $2$-cocycle $\phi$,  $E(X,A, \phi)$ satisfies $S$ if and only if 
$\phi( L_S)=0$
for any element $x, y_1, y_2 \in X$. 

\item[{\rm (iii)}] 
Let $C^S_n(X)$ be the subgroup of $C_n(X)$
generated by 
\begin{eqnarray*}
\lefteqn{
\bigcup_{j=0}^{n-1} \ \{ \sum_{i=0}^{k-1} \ 
\ [ \  ( x_1 Y^i, \cdots, x_j Y^i, y_1, x_{j+2}, \ldots, x_n ) 
}\\
& & 
\qquad + ( x_1 Y^i y_1, \cdots, x_j Y^i y_1, y_2, x_{j+2}, \ldots, x_n ) \ ] 
\  \} .
\end{eqnarray*}
Then the sequence of  subgroups  $\{ C^S_n (X), \partial_n \}$ is a subcomplex.  
\end{itemize}
\end{corollary}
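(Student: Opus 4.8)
The plan is to realize this corollary as the specialization of Theorem~\ref{thm:main} to one specific surjection. I would take the word length to be $2k$ and the number of distinct letters to be $m=2$, and define $\tau \colon \{1, \ldots, 2k\} \to \{1, 2\}$ by $\tau(2\ell-1)=1$ and $\tau(2\ell)=2$ for $\ell = 1, \ldots, k$. Then $y_{\tau(1)} y_{\tau(2)} \cdots y_{\tau(2k)} = y_1 y_2 y_1 y_2 \cdots y_1 y_2 = Y^k = w$, so the identity $S$ is exactly the $(\tau, 2k, 2)$ inner identity $x \, y_{\tau(1)} \cdots y_{\tau(2k)} = x$, which $X$ satisfies by hypothesis. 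All three parts will then be read off from Theorem~\ref{thm:main} once the partial words $\omega_i$ are computed in terms of $Y$.

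The key computation, carried out first, is that the partial words alternate predictably: $\omega_{2\ell} = y_{\tau(1)} \cdots y_{\tau(2\ell)} = Y^\ell$ and $\omega_{2\ell-1} = Y^{\ell-1} y_1$. For parts (i) and (ii) I would substitute these into the theorem's chain $L_S = (x, \omega_1) + \sum_{i=1}^{2k-1}(x\omega_i, y_{\tau(i+1)})$ and split the index $i$ into even and odd values. The even indices $i = 2\ell$, together with the leading $\omega$-free term (which plays the role of $i=0$, with $Y^0$ the empty word), contribute the summands $(xY^\ell, y_1)$, while the odd indices $i = 2\ell - 1$ contribute $(xY^{\ell-1}y_1, y_2)$; after reindexing the odd family by $\ell \mapsto \ell - 1$, the two combine into $\sum_{i=0}^{k-1}\ [\ (xY^i, y_1) + (xY^i y_1, y_2)\ ]$, which is precisely the chain displayed in (i). Hence (i) and (ii) are immediate consequences of Theorem~\ref{thm:main}(i) and (ii).

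For part (iii), the same substitution and even/odd splitting must be applied to the generators of $C^S_n(X)$ from the definition of $C^S_n(X)$ preceding Theorem~\ref{thm:main}. Fixing the slot $j$ at which $y_{\tau(1)} = y_1$ sits, the generator $(x_1, \ldots, x_j, y_{\tau(1)}, x_{j+2}, \ldots, x_n) + \sum_{i=1}^{2k-1}(x_1 \omega_i, \ldots, x_j \omega_i, y_{\tau(i+1)}, x_{j+2}, \ldots, x_n)$ regroups, by the identical even/odd argument applied uniformly in every coordinate $x_1 \omega_i, \ldots, x_j \omega_i$, into $\sum_{i=0}^{k-1}\ [\ (x_1 Y^i, \ldots, x_j Y^i, y_1, x_{j+2}, \ldots, x_n) + (x_1 Y^i y_1, \ldots, x_j Y^i y_1, y_2, x_{j+2}, \ldots, x_n)\ ]$, matching the generators listed in (iii) up to the evident relabeling of the position index. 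Consequently the subgroup $C^S_n(X)$ defined here coincides with the one produced by Theorem~\ref{thm:main} for this $\tau$, and the subcomplex property of $\{C^S_n(X), \partial_n\}$ follows directly from Theorem~\ref{thm:main}(iii).

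I expect no genuine obstacle: the content is entirely bookkeeping, and the only point requiring care is the reindexing of the alternating word. One must keep straight that the leading, $\omega$-free term of the theorem's formula corresponds to the $i=0$ summand ($Y^0$ empty), that the even and odd partial words are $Y^\ell$ and $Y^{\ell-1}y_1$ respectively, and that the summation ranges match so that the final odd index $i = 2k-1$ still supplies its $y_2$-term and no boundary summand is lost or doubled.
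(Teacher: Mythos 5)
Your proposal is correct and is exactly the paper's intended argument: the paper offers no separate proof, stating this corollary as an immediate specialization of Theorem~\ref{thm:main}, and your choice of $\tau\colon\{1,\ldots,2k\}\to\{1,2\}$ with $\tau(2\ell-1)=1$, $\tau(2\ell)=2$, together with the even/odd computation $\omega_{2\ell}=Y^{\ell}$, $\omega_{2\ell-1}=Y^{\ell-1}y_1$, is precisely the bookkeeping that makes that specialization explicit. Your regrouping of the theorem's chain $L_S$ and of the subcomplex generators into the corollary's displayed forms checks out.
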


\begin{remark}
{\rm 
A quandle  $X$ is {\it medial}, or {\it abelian}, if the identity $S$: 
$$(x*y)*(u*v)=(x*u)*(y*v)$$ holds
for any $x,y,u,v \in X$. This property is well known, see \cite{JPSZ} for 
some discussions.
We note that this is not in the form of inner identity, but point out that procedures analogous to those
in the proof of Theorem~\ref{thm:main}  (i) and (ii) still apply. 
Let 
$L_S= [\, (x,y) + (x*y, u*v)\,  ] - [\, (x, u) + (x*u, y*v) \, ] $.
It holds that if $X$ is medial, then  (i) $L_S \in Z_2^R(X)$ for any $x,y,u,v \in X$, and (ii)
for a $2$-cocycle $\phi$ with a coefficient abelian group $A$, $E(X,A,\phi)$ is medial if $\phi(L(S))=0$. 

Proof is direct computations. For (i), we compute
\begin{eqnarray*}
\partial (L_S) &=& [\, (x) - (x*y)  + (x*y) - ((x*y)*(u*v) )\,  ] \\
& & - [\, (x) - (x*u) + (x*u) - ((x*u)*(y*v)) \, ] \quad = \quad 0 ,
\end{eqnarray*}
and for (ii), we compute
\begin{eqnarray*}
\lefteqn{ (\ (x,a) * (y,b) \ ) * (\ (u, c) * (v, d) \ ) } \\
 &=& (x*y, a+ \phi ( x, y) ) * (u*v, c + \phi( u, v) ) \\
 &=& ( ( x*y ) * (u*v) , a+ \phi ( x, y) + \phi ( x*y, u*v)  ),  \\
\lefteqn{ (\ (x,a) * (u,c) \ ) * (\ (y,b) * (v, d) \ ) } \\
 &=& (x*y, a+ \phi ( x, u) ) * (y*v, b + \phi( y, v) ) \\
 &=& ( ( x*u ) * (y*v) , a+ \phi ( x, u) + \phi ( x*u, y*v)  ).
 \end{eqnarray*}
}
\end{remark}

\section{Inner identities}\label{sec:id}

In this section we examine  
specific inner identities, as well as  quandles that satisfy these identities.
First we present  the number of type $n$ Rig quandles for possible values of $n$. 
The following list of vectors $[k, m]$ represent
that there are $m$ Rig quandles (all connected quandles of order $<48$) of type $k$. 

$$
\begin{array}{llllllll}
 [ 2, 117 ] & [ 3, 38 ] & [ 4, 90 ] & [ 5, 16 ] & [ 6, 117 ] & [ 7, 15 ] &   [ 8, 38 ] & [ 9, 13 ]  \\ 
 { } [ 10, 31 ] & [ 11, 10 ] & [ 12, 52 ] & [ 13, 4 ] & [ 14 , 19 ] & [ 15, 14 ] & [ 16, 9 ] & [ 18, 27 ] \\
{ }   [ 20, 19 ] & [ 21, 14 ] & [ 22 , 11 ] & [ 23, 22 ] & [ 24, 9 ] & [ 26, 5 ] & [ 28, 17 ] & [ 30, 15 ]\\
{ }   [ 31 , 6 ] & [ 36, 12 ] & [ 40, 16 ] & [ 42, 12 ] & [ 46, 22 ] & & &
\end{array}
$$

\begin{remark}
{\rm
Let $X$ be a quandle. 
The subgroup of ${\rm Sym}(X)$ generated by right transformations $\{ R_x \ | \ x \in X\}$ is called 
the {\it inner automorphism group} and denoted by ${\rm Inn}(X)$. 
For a quandle $X$, the map ${\rm inn}: X \rightarrow {\rm Inn}(X)$ defined by 
${\rm inn}(a)=R_a$ for $a \in X$ is called the {\it inner representation}.
Computer calculations show that for over 3000 non-faithful quandles $X$ (mostly generalized Alexander quandles)
it holds that ${\rm type}(X)={\rm type}({\rm inn}(X))$.
For the great majority of these quandles,
${\rm inn}: X \rightarrow {\rm inn}(X)$  are abelian extensions.
In \cite{CSV}, it was conjectured that if a quandle $X$ is a kei, then 
any abelian extension of $X$ is a kei. 
Thus we make the following conjectures.
}
\end{remark}

\begin{conjecture}
{\rm 
(1) 
If $\alpha: E \rightarrow X$ is a connected abelian extension then ${\rm  type}(E)={\rm  type}(X)$.

\noindent 
(2) 
If $Q$ is connected then 
${\rm type}(Q) = {\rm type}({\rm inn}(Q))$.
}
\end{conjecture}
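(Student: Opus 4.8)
The plan is to reduce both parts of the conjecture to a single structural lemma about connected quandles: \emph{if $g$ lies in the center $Z({\rm Inn}(Q))$ of the inner automorphism group of a connected quandle $Q$ and $g$ fixes some point, then $g={\rm id}$}. This is immediate from transitivity: if $g(p)=p$ and $q=h(p)$ with $h\in{\rm Inn}(Q)$, then $g(q)=g h(p)=h g(p)=h(p)=q$, so $g$ fixes every point. In both parts the relevant element turns out to be a power of a right translation that automatically has a fixed point (coming from idempotency), so the whole difficulty is concentrated in verifying centrality. I would also record the two easy ``divisibility'' halves, which reduce each part to a single remaining inequality: for any surjective quandle homomorphism the type of the target divides the type of the source (so ${\rm type}({\rm inn}(Q))\mid{\rm type}(Q)$), and in any abelian extension $E\to X$ one has ${\rm type}(X)\mid{\rm type}(E)$ by projecting $R_{(y,b)}^{\,{\rm type}(E)}={\rm id}$ down to $X$ for every $y$.

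For part (2), I would first make the inner representation ${\rm inn}\colon Q\to{\rm inn}(Q)$ explicit via the standard identity $R_{a*b}=R_b R_a R_b^{-1}$, so that ${\rm inn}$ is a surjective homomorphism onto the conjugation quandle on $\{R_a\}$ and $R_a *^n R_b=R_b^{\,n} R_a R_b^{-n}=R_{a*^n b}$. Setting $n={\rm type}({\rm inn}(Q))$, the defining identity $R_a *^n R_b=R_a$ for all $a,b$ is exactly the statement that $R_b^{\,n}$ commutes with every $R_a$, i.e. $R_b^{\,n}\in Z({\rm Inn}(Q))$. Since $b*b=b$ gives $R_b(b)=b$, the element $R_b^{\,n}$ fixes $b$; the lemma then forces $R_b^{\,n}={\rm id}$ for every $b$, i.e. $a *^n b=R_b^{\,n}(a)=a$, so ${\rm type}(Q)\mid n$. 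Combined with the easy half $n\mid{\rm type}(Q)$ this gives equality.

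For part (1), I would start from Theorem~\ref{thm:main}(ii) specialized to the type identity $S\colon x*^t y=x$ with $t={\rm type}(X)$: the extension $E(X,A,\phi)$ has type dividing $t$ exactly when $\phi(L_S)=0$, where $\phi(L_S)=\Phi(x,y):=\sum_{i=0}^{t-1}\phi(x*^i y,y)$. A direct induction gives $R_{(y,b)}^{\,t}(x,a)=(x,\,a+\Phi(x,y))$, so this map is a vertical automorphism of $E$ (a power of an inner automorphism). Imposing the automorphism condition $R_{(y,b)}^{\,t}((x,a)*(x',a'))=R_{(y,b)}^{\,t}(x,a)*R_{(y,b)}^{\,t}(x',a')$ yields $\Phi(x*x',y)=\Phi(x,y)$, i.e. $\Phi(\cdot,y)$ is invariant under every right translation. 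Here connectivity enters: since $E$ connected forces $X=\alpha(E)$ connected, $\Phi(\cdot,y)$ is constant on the single ${\rm Inn}(X)$-orbit, and evaluating at $x=y$ (using $y*^i y=y$ and the cocycle condition $\phi(y,y)=0$) gives $\Phi(\cdot,y)\equiv\Phi(y,y)=0$. Hence $R_{(y,b)}^{\,t}={\rm id}$, so ${\rm type}(E)\mid t$, and with ${\rm type}(X)\mid{\rm type}(E)$ we conclude equality. Equivalently, the constancy of $\Phi$ exhibits $R_{(y,b)}^{\,t}$ as a diagonal shift $T_{\Phi(y)}$, which is central in ${\rm Inn}(E)$ and fixes the fiber over $y$, so the lemma applies directly.

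The step I expect to carry the real weight is the passage from ``$R_b^{\,n}$ (resp.\ $R_{(y,b)}^{\,t}$) commutes with the generators'' to centrality and then to triviality, since this is precisely where connectivity is indispensable; the lemma packages this cleanly but its hypothesis must be checked honestly in each case. A secondary point to address is well-definedness of type: the arguments assume finite type (automatic for finite connected quandles), and the infinite-type case should be stated separately or excluded. Finally, it is worth noting that the part (1) argument uses only that $X$ is connected, not that $E$ is, so the method appears to prove the slightly stronger statement that every abelian extension of a connected quandle preserves type; confirming that no hidden hypothesis is being used is the last thing I would double-check.
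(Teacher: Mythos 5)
You should know at the outset that the paper contains \emph{no proof} of this statement: it is one of the paper's stated conjectures, supported only by computer evidence on Rig quandles and by analogy with the kei conjecture of \cite{CSV}. So there is no argument of the authors' to compare yours against; your proposal has to be judged on its own, and after checking it step by step I believe both parts are correct --- i.e., you have (modulo careful write-up) settled the conjecture rather than reproduced a proof. Part (2) is clean: $R_{a*b}=R_bR_aR_b^{-1}$ makes ${\rm inn}$ a surjective homomorphism onto the conjugation subquandle $\{R_a\}$, giving ${\rm type}({\rm inn}(Q))\mid{\rm type}(Q)$; conversely, with $n={\rm type}({\rm inn}(Q))$, the element $R_b^{\,n}$ commutes with every generator $R_a$ of ${\rm Inn}(Q)$, hence is central, and it fixes $b$ since $b*b=b$; your lemma (a central element of a transitive permutation group with a fixed point is the identity) then gives $R_b^{\,n}={\rm id}$, so ${\rm type}(Q)\mid n$. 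Connectivity is used exactly where it must be --- transitivity --- and nowhere else.

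Part (1) is also sound. The computation $R_{(y,b)}^{\,t}(x,a)=(x,\,a+\Phi(x,y))$ with $\Phi(x,y)=\sum_{i=0}^{t-1}\phi(x*^iy,y)$ is the same induction as in the paper's proof of Theorem~\ref{thm:main}(ii); your genuinely new ingredient is to impose that $R_{(y,b)}^{\,t}$ is a quandle automorphism of $E$, which forces $\Phi(x*x',y)=\Phi(x,y)$, hence ${\rm Inn}(X)$-invariance of $\Phi(\cdot,y)$, hence constancy when $X$ is connected, hence vanishing by evaluation at $x=y$ (where $\phi(y,y)=0$ enters). I verified the identity $\Phi(x*x',y)=\Phi(x,y)$ independently from the cocycle condition (for $t=2$ it follows by adding the cocycle identities at $(x,x',y)$ and $(x*y,x'*y,y)$ and using $u*y*y=u$), so this step is not an artifact of the formalism. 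Three minor points to fix in a final write-up: (a) your closing ``equivalently'' remark is imprecise --- the shift $T_c$ fixes the fiber over $y$ only setwise; what rescues that variant is that $R_{(y,b)}$ fixes the \emph{point} $(y,b)$ by idempotency of $E$, and that route also requires $E$ (not merely $X$) to be connected, unlike your main argument; (b) the infinite-type case is harmless rather than needing exclusion, since ${\rm type}(X)=\infty$ forces ${\rm type}(E)=\infty$ by the divisibility half, and in (2) your two directions show the period sets of $Q$ and ${\rm inn}(Q)$ coincide as subgroups of $\mathbb{Z}$; (c) you correctly note that (1) needs only $X$ connected, which is formally stronger than the conjecture as stated --- but be clear that this still falls short of the full conjecture of \cite{CSV}, which has no connectivity hypothesis at all, and your orbit argument genuinely stops at the orbit of $y$ in the disconnected case.
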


Let $X$ be a quandle. Let $xw=x$ be an inner identity, where $w$ is a word in  the alphabet $\Lambda$.
Let $|w|$ denote the length of $w$, that is, the number of letters in $w$.
We note that if the length of $w$ is $k$, then the quandle is of type at most $k$, since the identity must hold for any values 
of variables. We observe the following.

\begin{lemma}\label{lem:triv}
Let $w$  be a word in  the alphabet $\Lambda$ such that some
letter of the alphabet, say, $a$, appears only once in $w$. Then a quandle
satisfying $xw = x$ is  trivial.
\end{lemma}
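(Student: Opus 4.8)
The plan is to reduce the statement to a short computation inside ${\rm Inn}(X)$ by means of the criterion recorded in Section~\ref{sec:main}: a rack $X$ satisfies the inner identity $x y_{\tau(1)} \cdots y_{\tau(k)}=x$ if and only if $R_{y_{\tau(k)}} \cdots R_{y_{\tau(2)}} R_{y_{\tau(1)}} = {\rm id}$ for every assignment of the letters. Write $w = y_{\tau(1)}\cdots y_{\tau(k)}$ and let $p$ be the unique position at which the letter $a$ occurs, so that $y_{\tau(p)} = a$ while every other position carries a letter different from $a$.

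The key maneuver is a specialization of the variables. Since the identity holds for all values of every letter in the alphabet, and these letters are universally and independently quantified, I am free to assign the letter $a$ an arbitrary value $c \in X$ and to set every other letter to one common arbitrary value $b \in X$. This substitution is exactly what the hypothesis ``$a$ appears only once'' makes useful: after it, the composition contains a single factor $R_c$ surrounded by copies of $R_b$. The criterion then reads $R_b^{\,k-p}\, R_c\, R_b^{\,p-1} = {\rm id}$, and since powers of $R_b$ commute, solving for $R_c$ gives $R_c = R_b^{\,-(k-p)-(p-1)} = R_b^{\,1-k}$ for all $b,c \in X$. The right-hand side does not involve $c$, so all right translations $R_c$ coincide with one and the same permutation, say $R$.

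To finish I would invoke the quandle axiom, the one ingredient not yet used. Idempotency says $R_c(c) = c*c = c$ for every $c$; as $R_c = R$ for all $c$, this forces $R(c)=c$ for all $c \in X$, i.e.\ $R = {\rm id}$. Hence $R_c = {\rm id}$, equivalently $x*c = x$, for all $x,c \in X$, and $X$ is trivial.

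The main obstacle is conceptual rather than computational: one must carefully justify the specialization that collapses all letters other than $a$ to a common value $b$, and this is precisely where the single occurrence of $a$ is essential, since it guarantees that a lone factor $R_c$ survives, uncoupled from the $R_b$-block. It is also worth emphasizing that idempotency is indispensable at the last step: for a mere rack the same computation only shows that all right translations are equal and satisfy $R^k = {\rm id}$, which need not be the identity, so the conclusion genuinely uses that $X$ is a quandle.
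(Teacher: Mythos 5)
Your proof is correct and follows essentially the same route as the paper's: convert the identity to a relation in ${\rm Inn}(X)$, use the single occurrence of $a$ to solve for $R_a$ as a product of the other right translations, observe that the result is independent of $a$ so all right translations coincide, and then invoke idempotency $R_a(a)=a$ to conclude triviality. The only cosmetic difference is that you specialize all remaining letters to one common value $b$, which makes the solved-for expression $R_c = R_b^{\,1-k}$ explicit, whereas the paper leaves the other letters at arbitrary fixed values.
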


\begin{proof}
Convert this identity to a product of $R_c$, $c \in X$. 
The identity is equivalent to that his product
equals 1,  so we can solve it for $R_a$. Thus $R_a$ is a product of $R_c^{\pm 1}$, $c \neq a$.
This identity must hold for any $a, c \in X$. By fixing the values of $c$ and choosing different values of $a$, 
 we obtain $R_a = R_{a'}$ for
all $a, a' \in X$.  Since $R_a(a)=a*a=a$, we obtain $R_{a'}(a)=a*a'=a$ for all $a, a' \in X$. 
\end{proof}

\begin{corollary}
 If $Q$ is a non-trivial quandle satisfying an identity $xw =x$ where $w$ is
a word in alphabet $\Lambda$, then each letter appearing in $w$  must appear
at least twice.
\end{corollary}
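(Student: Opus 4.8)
The plan is to derive this statement as the direct contrapositive of Lemma~\ref{lem:triv}; indeed the two assertions are logically equivalent. The lemma says that the presence of a once-occurring letter forces triviality, whereas the corollary says that non-triviality forbids any such letter. So I would not reprove anything from scratch, but simply repackage the lemma.

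Concretely, I would assume that $Q$ is non-trivial and satisfies the inner identity $xw=x$, and argue by contradiction. Suppose some letter $a$ that occurs in $w$ occurs there exactly once. For a letter that is present in $w$, failing to appear at least twice can only mean appearing exactly once, so this is the sole case to exclude. Applying Lemma~\ref{lem:triv} to this $a$ then yields that every quandle satisfying $xw=x$, and in particular $Q$, is trivial---contradicting our hypothesis. Hence no letter occurring in $w$ can occur only once, that is, each letter appearing in $w$ appears at least twice.

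The one point I would take care to verify is the matching of quantifiers between the two statements: the lemma's hypothesis ``some letter of the alphabet appears only once in $w$'' must be read as ``appears exactly once,'' so that it applies precisely to letters present in $w$ with multiplicity one, rather than to letters absent from $w$ altogether. With that reading the implication is immediate, and there is no genuine obstacle: all the real work has already been done inside Lemma~\ref{lem:triv}, where the identity $xw=x$ is converted into a relation among the right translations $R_c$ and solved for $R_a$.
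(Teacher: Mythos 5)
Your proposal is correct and matches the paper's intent exactly: the paper states this as a corollary of Lemma~\ref{lem:triv} with no written proof, precisely because it is the immediate contrapositive of that lemma, which is what you argue. Your care about reading ``appears only once'' as ``exactly once'' is sound and introduces no gap.
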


\begin{lemma}\label{lem:type}
If a quandle $X$ satisfies $xw=x$ where $w$ has two letters one of which  appears consecutively $k$ times,
then 
${\rm type}(X) \leq {\rm gcd}\{ k, |w| - k \}$.
\end{lemma}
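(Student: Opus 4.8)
The plan is to recast the identity in terms of right translations and then let idempotency do the work. Recall that $X$ satisfies $xw=x$ for all values of its letters if and only if the corresponding product of right translations equals ${\rm id}\in{\rm Inn}(X)$ (as noted in Section~\ref{sec:main}), and that ${\rm type}(X)$ is by definition the least positive integer $n$ with $R_y^{n}={\rm id}$ for every $y\in X$. Hence it suffices to exhibit a common exponent: if I can show $R_y^{d}={\rm id}$ for all $y$ with $d={\rm gcd}\{k,|w|-k\}$, then the order of each $R_y$ divides $d$, so ${\rm type}(X)$ divides $d$ and in particular ${\rm type}(X)\le d$.

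First I would put $w$ into a normal form. For a factorization $w=uv$ one has $R_{uv}=R_vR_u$ and $R_{vu}=R_uR_v$, whence $R_{vu}=R_uR_{uv}R_u^{-1}$, a purely group-theoretic identity in ${\rm Sym}(X)$. Thus $R_w={\rm id}$ (for each fixed choice of letters) forces $R_{w'}={\rm id}$ for every cyclic rotation $w'$ of $w$. Since $w$ uses only the two letters $y_1,y_2$ and the $k$ occurrences of, say, $y_1$ are consecutive, all the remaining letters equal $y_2$; rotating the block of $y_1$'s to the front, I may assume $w=y_1^{k}\,y_2^{\ell}$ with $\ell=|w|-k$.

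With this normal form the relation $R_w={\rm id}$ becomes $R_{y_2}^{\ell}R_{y_1}^{k}={\rm id}$, that is
\[
R_{y_1}^{k}=R_{y_2}^{-\ell}\qquad\text{for all } y_1,y_2\in X .
\]
The crucial point is that the right-hand side is independent of $y_1$: fixing any $z$ and letting $y_1$ vary shows that $R_y^{k}$ equals one and the same permutation $C=R_z^{-\ell}$ for every $y\in X$. Now idempotency enters — evaluating at $y$ itself gives $C(y)=R_y^{k}(y)=y*^k y=y$, so $C={\rm id}$. Hence $R_y^{k}={\rm id}$ for all $y$, and the same equation then yields $R_z^{\ell}=C^{-1}={\rm id}$ for all $z$. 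Since $R_y^{k}=R_y^{\ell}={\rm id}$ for every $y$, the order of each $R_y$ divides both $k$ and $\ell=|w|-k$, hence divides $d={\rm gcd}\{k,|w|-k\}$, giving ${\rm type}(X)\le d$ as claimed.

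I expect the only delicate point to be the reduction step. One must genuinely use that $w$ has exactly two letters with all $k$ copies of one of them consecutive, so that after the cyclic rotation the complementary block is precisely $y_2^{\ell}$ and contains no $y_1$; this is exactly what makes the right-hand side of the displayed identity independent of $y_1$ and lets idempotency collapse $C$ to the identity. (If $y_1$ had further, non-consecutive occurrences the factor $R_v^{-1}$ would still involve $y_1$ and this argument would break, so the consecutiveness hypothesis is essential, not cosmetic.) Everything else is routine bookkeeping with the $R$-translation calculus already set up in Section~\ref{sec:main}.
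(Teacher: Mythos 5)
Your proof is correct and takes essentially the same route as the paper's: both recast the identity as a relation in ${\rm Inn}(X)$, reduce it to $R_{y_1}^{k}=R_{y_2}^{-(|w|-k)}$ for all $y_1,y_2$ (your cyclic-rotation normalization is the paper's collapsing of the two commuting outer powers of $R_a$ in $w=a^h b^k a^{|w|-h-k}$), and then use idempotency ($R_x$ fixes $x$) to force both sides to be the identity, so the order of every right translation divides $\gcd\{k,|w|-k\}$.
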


\begin{proof}
\begin{sloppypar}
Under the assumption $w$ is written as $w=a^h b^k a^{|w|-h-k}$ for $h \geq 0$, where the exponents represents the number of  repetitions.
Then the identity $xw=x$ is converted to the identity 
 $(R_a)^{|w| - h - k}  (R_b)^k (R_a)^h=1$ in ${\rm Inn}(X)$. 
 \end{sloppypar}
Hence $(R_b)^k = (R_a^{-1})^{|w|-k}$, so that 
$a*^kb=(R_b)^k (a)=(R_a^{-1})^{|w|-k}(a)=a$, 
and $$b*^{|w|-k} a=(R_a)^{|w|-k}(b)=(R_b^{-1})^k (b)= b$$  for all $a, b \in X$, 
as desired.
\end{proof}

We examine identities of small lengths.

\bigskip

\noindent
{\it Length 1}. If the identity $xa=x$ holds in a quandle $X$, then $X$ is trivial by Lemma~\ref{lem:triv}.

\bigskip

\noindent
{\it Length 2}. The identity $xaa=x$ holds in a quandle $X$ if and only if $X$ is a kei by Lemma~\ref{lem:type}.
If the identity $xab=x$ holds, then the quandle is trivial by Lemma~\ref{lem:triv}.

\bigskip

\noindent
{\it Length 3}. The identity $xaaa=x$ holds in a quandle $X$ if and only if $X$ is of type 3.
Other cases are trivial quandles by Lemma~\ref{lem:triv}.

\bigskip

\noindent
{\it Length 4}. Excluding trivial quandles from Lemma~\ref{lem:triv}, we have the following cases.
(1) $xaabb=x$,
(2) $xabba=x$,
(3) $xabab=x$.
Lemma~\ref{lem:type} implies that (1) or (2) holds if and only if the quandle is  a kei.

Computer calculation shows that among 790 Rig quandles of order less than 48, the following 
quandles satisfy the identity $xabab=x$, none of which is a kei.
$$
\begin{array}{lllllll}
Q(5,2) & Q(5,3) & Q(9,3) & Q(13,4) & Q(13,7) & Q(17,3) & Q(17,12) \\
 Q(25,4) &  Q(25,5) & Q(25,6) & Q(25,7)& Q(25, 8 ) & Q(29, 11) & Q(29, 16) \\
 Q(37, 45) & Q(37, 5) & Q(41,2) & Q(41, 3) &  Q(45, 36) & Q(45, 37) & 
\end{array}
$$

\noindent
{\it Length 5}. Excluding trivial quandles from Lemma~\ref{lem:triv}, there are 10 identities $xw=x$ (all must have  two distinct  letters in $w$),
$$w=aaabb, \ aabab, \  aabba, \ abaab, \ ababa, \ abbaa, \ aabbb, \ ababb, \ abbab, \ abbba. $$
From Lemma~\ref{lem:type},  the identities $aaabb$, $aabba$, $abbaa$, $aabbb$,  $abbba$ imply trivial quandle.
Computer calculation shows that none of the remaining is satisfied by any of 790 Rig quandles.
We conjecture that no connected quandle satisfies the remaining 5  identities of length $5$.

\bigskip

\noindent
{\it Length 6}. Words $w$ of length 6 with 2 letters, excluding those implying trivial and type 2, 3 quandles
from Lemmas~\ref{lem:triv}, \ref{lem:type}, consist of the following list.

The words $w$ such that identities $xw=x$ are not satisfied by any of the Rig quandles are:
\begin{eqnarray*}
w &=& aaabab, \ %17
aababa, \ %4 
aabbab, \ %13
aababb, \ %15
abaaab, \ %21
abaabb, \\ %22
& & ababbb, \ %19
abbaba, \ %6
abbaab, \ %24
ababaa, \ %9
ababba, \ %10
abbbab.  %25
\end{eqnarray*}
Thus we conjecture that no connected quandle satisfies these.

The following words are satisfied by the same 202 Rig quandles, which contain all 117 Rig keis: 
$aabaab$,  %14
$abaaba$,  %11
$abbabb$.  %23

The word 
$ababab$ is satisfied by 55 Rig quandles, 4 of which are keis. %20

\bigskip

\noindent
{\it Length 7}.
Except for those words $w$ that give trivial quandles from Lemmas~\ref{lem:triv}, \ref{lem:type}
and type 7 quandles (15 of them among 790 Rig quandles), there are only two Rig quandles that satisfy the identity $xw=x$ with 2 letter words $w$ of  length 7, and they are: 

$Q(8,2)=\mathbb{Z}_2[t]/(t^3 + t^2 +1)$ satisfies identities with 
$$w=aababba, %10
abbbaba, %18
ababbaa, %23
aabbbab, %28
aaababb, %38
abaabbb, %49
abbaaab. $$%52

$Q(8,3)=\mathbb{Z}_2[t]/(t^3 + t +1)$ satisfies identities with 
$$w=aabbaba, %6
abbabaa, %15
ababbba, %22
aababbb, %34
aaabbab, %40
abaaabb, %46
abbbaab. $$%55

Finally we observe the following.

\begin{proposition}\label{prop:many}
For any $m, n \in \mathbb{Z}$ such that $m>0$ and   $n>1$,   there exist infinitely many  connected quandles 
that satisfy the identity $xw=x$ for 
$$w= \underbrace{y_1 * \cdots * y_m * y_1 \cdots * y_m *\cdots y_1 \cdots * y_m}_{n\  {\rm repetitions}}. $$
\end{proposition}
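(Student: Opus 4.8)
The plan is to realize the desired quandles explicitly as one-dimensional Alexander quandles over finite fields, reducing the identity to a single polynomial condition on the Alexander parameter. By the characterization of inner identities in terms of right translations given in Section~\ref{sec:main}, the identity $xw=x$ with $w=(y_1\cdots y_m)^n$ holds for all $x,y_1,\dots,y_m\in X$ if and only if $(R_{y_m}\cdots R_{y_1})^n={\rm id}$ in ${\rm Inn}(X)$ for all $y_1,\dots,y_m$. So I would first carry out this composite for an Alexander quandle $x*y=tx+(1-t)y$ on the additive group of a commutative ring $R$, with $t\in R^\times$. Writing $B=R_{y_m}\cdots R_{y_1}$, a direct induction (a finite geometric sum) gives the affine map $B(x)=t^{m}x+(1-t)(t^{m-1}y_1+\cdots+y_m)$, whence $B^n(x)=t^{mn}x+S(t)\,c$, where $S(t)=1+t^{m}+\cdots+t^{(n-1)m}$ and $c=(1-t)(t^{m-1}y_1+\cdots+y_m)$.

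The computation isolates the relevant condition: if $S(t)=0$ in $R$, then $B^n(x)=t^{mn}x$, and moreover $t^{mn}-1=(t^{m}-1)S(t)=0$, so $B^n={\rm id}$ and the identity holds identically. Thus it suffices to produce infinitely many connected Alexander quandles whose parameter satisfies
\[
S(t)=1+t^{m}+t^{2m}+\cdots+t^{(n-1)m}=0 .
\]
I would take $R=\mathbb{F}_p$ a prime field and $t=\zeta$ an element with $S(\zeta)=0$; connectivity is automatic provided $\zeta\neq1$, since then $1-\zeta$ is invertible in $\mathbb{F}_p$ and the inner automorphism group already contains every translation $x\mapsto x+(1-\zeta)(b-a)$, acting transitively.

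It remains to supply the number-theoretic input. The roots of $S$ are exactly the $\zeta$ with $\zeta^{mn}=1$ and $\zeta^{m}\neq1$, that is, the elements of multiplicative order $d$ for a divisor $d\mid mn$ with $d\nmid m$. Such a $d$ exists precisely because $n>1$ (for instance $d=mn$), and any such $d$ satisfies $d\geq2$, so the corresponding $\zeta\neq1$. By Dirichlet's theorem there are infinitely many primes $p\equiv1\pmod d$, and for each such $p$ the cyclic group $\mathbb{F}_p^\times$ contains an element $\zeta$ of order $d$; the resulting connected Alexander quandles $(\mathbb{F}_p,\,x*y=\zeta x+(1-\zeta)y)$ have pairwise distinct orders $p$ and all satisfy $xw=x$. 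One can avoid Dirichlet by instead fixing a single auxiliary prime $\ell\nmid d$ and using the extensions $\mathbb{F}_{\ell^{s}}$ with ${\rm ord}_d(\ell)\mid s$, whose orders grow without bound.

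The routine part is the geometric-sum computation of $B^n$; the main obstacle is arranging, for infinitely many fields at once, that $S(\zeta)=0$ and $\zeta\neq1$ hold simultaneously. These two requirements conflict only superficially: $S(\zeta)=0$ demands that $\zeta$ be an $mn$-th root of unity that is not an $m$-th root of unity, while connectivity demands $\zeta\neq1$, and the single divisibility bookkeeping $d\mid mn$, $d\nmid m$, $d\geq2$ (available because $n>1$) reconciles them. A secondary point worth checking is that in the connected case the condition $S(t)=0$ is not merely sufficient but forced, which follows because $c$ sweeps out all of $(1-t)R$ as the $y_i$ vary over the full underlying module.
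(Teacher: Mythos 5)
Your proof is correct, and its core is the same as the paper's: both reduce the identity to the statement that an Alexander quandle $x*y=tx+(1-t)y$ satisfies $xw=x$ as soon as the geometric-sum polynomial $S(t)=g_{m,n}(t)=1+t^{m}+\cdots+t^{(n-1)m}$ vanishes (whence also $t^{mn}=1$, since $t^{mn}-1=(t^m-1)S(t)$), with connectivity supplied by invertibility of $1-t$. Where you diverge is in manufacturing infinitely many such quandles. The paper sidesteps all number theory by taking the underlying module to be the quotient ring itself, $X=\mathbb{Z}_p[t]/(g_{m,n}(t))$, for every prime $p>n$: the relation $g_{m,n}(t)=0$ then holds tautologically, and $g_{m,n}(1)=n\not\equiv 0 \pmod p$ forces $1-t$ to be a unit, giving one connected example of order $p^{m(n-1)}$ per prime. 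You instead insist on a one-dimensional module, i.e.\ a root $\zeta$ of $S$ in a prime field, which requires an element of multiplicative order $d$ with $d\mid mn$, $d\nmid m$, $d\geq 2$, and hence Dirichlet's theorem on primes $p\equiv 1\pmod d$ --- a substantially heavier tool than anything in the paper, although your fallback family $\mathbb{F}_{\ell^{s}}$ over a fixed prime $\ell\nmid d$ is elementary and already suffices, so the reliance on Dirichlet is a matter of taste rather than a gap. The trade-off: the paper's quotient-ring construction is uniform and self-contained (every prime $p>n$ works, no existence of roots to arrange), while yours produces more concrete examples --- quandles of prime order on $\mathbb{F}_p$ --- and your closing observation that connectivity forces $S(t)=0$ (necessity, not just sufficiency) is a small addition the paper does not make.
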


\begin{proof}
We consider Alexander quandles $(X, t)$, where $t$ is an automorphism of an abelian group $X$,
with $x*y=tx + (1-t)y$. 
Inductively one computes
$$x*y_1 * \cdots * y_k = t^k x + (1-t) ( t^{k-1} y_1 + t^{k-2} y_2 + \cdots + t y _{k-1} + y_k ) . $$
Setting $k=mn$ and $y_{hm+j}=y_j$ for $h=0, 1, \ldots, n-1$, $j=1, \ldots, m$, 
we obtain 
\begin{eqnarray*}
\lefteqn{ x* \underbrace{y_1 * \cdots * y_m * y_1 \cdots * y_m *\cdots y_1 \cdots * y_m}_{n\  {\rm repetitions}} } \\
&=&  t^{mn} x + (1-t)( t^{mn-m} + t^{mn-2m} + \cdots + t^m + 1  ) \\
 & & \times  ( t^{m-1} y_1 + y^{m-2} y_2 + \cdots + t y _{m-1} + y_m ) . 
\end{eqnarray*}
Let $g_{m,n}(t)=t^{mn-m} + t^{mn-2m} + \cdots + t^m + 1 =(t^{mn} -1)/(t^m -1)$ and 
$X=\mathbb{Z}_p [t]/( g_{m,n}(t)  )$. 
For all primes $p>n$, $g_{m,n} (1) \neq 0$, so that   $1-t$ is invertible in $X$ and hence $X$ is connected, and $t^{mn}=1$ in $X$.
Hence  there is 
an infinite family of connected quandles that satisfy $xw=x$.
\end{proof}

Note that in Proposition~\ref{prop:many}, $n=1$ is not possible for any choice of a word $ y_1 \cdots  y_m$ by Lemma~\ref{lem:triv}.

\begin{remark}
{\rm

For a group $G$, the least possible integer $n$ such that $x^n = 1$ 
for all $x \in G$ is called the {\it exponent} of $G$. 
We note that if the exponent of ${\rm  Inn}(X)$ is $n$, then 
$X$ satisfies the inner identity  $xw^n = x$ for any word $w$ of length $n$,
 where $\omega^n$ denotes $\omega$ repeated 
$n$ times with no parentheses. 
We computed the exponent of ${\rm  Inn}(X)$ by \textsf{GAP} \  for all 
790 Rig quandles, and obtained the following data.
The notation $[e, n]$ below indicates that there are $n$ Rig quandles $X$ such that  the exponent of ${\rm Inn}(X)$ is $e$. 
The pairs are listed in order
of increasing exponent.
$$
\begin{array}{llllllll}
 [ 6, 11 ] & [ 10, 4 ] & [ 12, 59 ] & [ 14, 3 ] & [ 15, 1 ] & [ 18, 47 ] & 
  [ 20, 15 ] \\
{ }   [ 21, 2 ] &
{ }    [ 22, 1 ] & [ 24, 38 ] & [ 26, 1 ] & [ 30, 22 ] & 
  [ 34, 1 ] & [ 36, 31 ] \\
  { }  [ 38, 1 ] & [ 39, 6 ] &
{ }    [ 40, 6 ] & [ 42, 22 ] & 
  [ 46, 1 ] & [ 48, 4 ] & [ 50, 5 ] \\
  { }  [ 52, 2 ] & [ 54, 9 ] & [ 55, 4 ] & %
{ }   [ 57, 2 ] & [ 58, 1 ] & [ 60, 44 ] & [ 62, 7 ] \\
{ }  [ 66, 4 ] & [ 68, 2 ] & 
  [ 70, 3 ] & [ 72, 13 ]  & 
{ }    [ 74, 1 ] & [ 78, 13 ] & [ 82, 1 ] \\
{ }  [ 84, 24 ] & 
  [ 86, 1 ] & [ 90, 9 ] & [ 93, 2 ] & [ 94, 1 ]  & 
 { }   [ 100, 10 ] & [ 110, 4 ] \\ 
 { }  [ 111, 2 ] & [ 114, 2 ] & [ 116, 2 ] & [ 120, 27 ] & [ 129, 2 ] & [ 136, 4 ]  & 
{ }   [ 140, 6 ] \\
{ }  [ 148, 2 ] & [ 155, 4 ] & [ 156, 10 ] & [ 164, 2 ] & [ 168, 4 ] & 
  [ 171, 6 ] & [ 180, 12 ] \\
 { }   [ 186, 2 ] & [ 203, 6 ] & [ 205, 4 ] & [ 210, 4 ] & 
  [ 222, 2 ] & [ 240, 3 ] & [ 253, 10 ] \\
  { }  [ 258, 2 ] & 
 { }  [ 272, 8 ] & [ 301, 6 ] & 
  [ 310, 4 ] & [ 328, 4 ] & [ 330, 16 ] & [ 333, 6 ] \\
  { }  [ 342, 6 ] & [ 360, 1 ]  & 
 { }  [ 406, 6 ] & [ 410, 4 ] & [ 420, 10 ] & [ 444, 4 ] & [ 465, 8 ] \\
 { }  [ 506, 10 ] & 
  [ 602, 6 ] & [ 666, 6 ]  & 
 { }  [ 812, 12 ] & [ 820, 8 ] & [ 840, 3 ] & [ 903, 12 ] \\  
{ }   [ 930, 8 ] & [ 1081, 22 ] & [ 1332, 12 ] & [ 1640, 16 ] & 
{ }   [ 1806, 12 ] & 
  [ 2162, 22 ] & [ 2520, 2 ] 
\end{array}
$$

}
\end{remark}

\subsection*{Acknowledgements}
We are grateful to  Jozef Przytycki for valuable comments. 
M.S.  was partially supported by
the
NIH  1R01GM109459. 
The content of
this paper is solely the responsibility of the authors and does not necessarily
represent the official views of  NIH.


\begin{thebibliography}{99}
\setlength{\itemsep}{-3pt}

\bibitem{CIST}
Carter, J.S.; Ishii, A.; Saito, M.; Tanaka, K.,
{\it Homology for Quandles with Partial Group Operations,}
arXiv:1506.0827.

\bibitem{CENS}
 Carter, J.S.;  Elhamdadi,  Nikiforou, M.A.;  Saito, M., 
 {\it Extensions of quandles and cocycle knot invariants,}
  J. Knot Theory Ramifications 12 (2003), no. 6, 725--738. 


\bibitem{CJKLS} Carter, J.S.; Jelsovsky, D.; Kamada, S.; Langford, L.; Saito, M.,
{\it Quandle cohomology and state-sum invariants of knotted curves and surfaces,}
Trans. Amer. Math. Soc., 355 (2003) 3947--3989.

\bibitem{CKS}
Carter, J.S.; Kamada, S.;  Saito, M.,
Surfaces in $4$-space,
Encyclopaedia of Mathematical Sciences, Vol. 142,
Springer Verlag, 2004.


%\bibitem{Edwin1} Clark, W.E., 
%{\it List of non-faithful Rig quandles,}
%\url{http://shell.cas.usf.edu/~saito/QuandleColor/NonFaithfulRigQuandles.txt}.

\bibitem{CEHSY}
Clark, W.E.; Elhamdadi, M.; Hou, X-D.; Saito, M.; Yeatman, T.,
{\it Connected quandles associated with 
pointed abelian groups,}
Pacific J. of Math. 264 (2013), 31--60. 


\bibitem{CSV}
Clark, W.E.; Saito, M.; Vendramin, L.,
{\it Quandle coloring and cocycle invariants of composite knots and abelian extensions,}
 arXiv:1407.5803, to appear in J. Knot Theory Ramifications.


\bibitem{EN} M.~Elhamdadi, S.~Nelson, 
{\it N-degeneracy in rack homology and link invariants,}
 Hiroshima Math. J. 42 (2012), no. 1, 127--142.

\bibitem{FR}
Fenn, R.;   Rourke, C.,
{\it Racks and links in codimension two,}
 J. Knot Theory Ramifi- cations 1 (1992) 343--406.


 \bibitem{FRS1} 
Fenn, R.; Rourke, C.; Sanderson, B., {\it Trunks and classifying spaces,}
Appl. Categ. Structures, {3} (1995) 321--356.

\bibitem{IIJO}
 Ishii, A.;  Iwakiri, M.; Jang, Y.;  Oshiro, K., 
 {\it A G-family of quandles and handlebody- knots,}
  Illinois J. Math. 57 (2013) 817--838.


\bibitem{JPSZ}
 Jedli\v{c}ka, P.;  Pilitowska, A.;  Stanovsk\'{y}, D.;  Zamojska-Dzienio, A., 
{\it The structure of medial quandles}, J. Algebra 443 (2015), 300--334. 

\bibitem{Joyce2}
Joyce, D.,  
{\it Simple quandles, } 
J. Algebra 79 (1982), 307--318.  


\bibitem{KO}
Kamada, S.; Oshiro, K., 
{\it Homology groups of symmetric quandles and cocycle in- variants of links and surface-links,}
 Trans. Amer. Math. Soc. 362 (2010) 5501--5527.


\bibitem{NP}
 Niebrzydowski, M.; Przytycki, J.H.,
 {\it  Burnside kei,}
  Fund. Math. 190 (2006), 211--229. 
  
\bibitem{rig}
Vendramin, L.,  {\it Rig -- A GAP package for racks and quandles.}
Available at 
\url{http://github.com/vendramin/rig/}. 
%\url{http://code.google.com/p/rig/}.
 
\bibitem{Leandro}
 Vendramin, L., {\it On the classification of quandles of low order},
 J. Knot Theory Ramifications, {21} (2012), no. 9, 1250088.
% preprint, arXiv:1105:5341.

\bibitem{Zab} Zablow, J.,
{\it 
On relations and homology of the Dehn quandle},
Algebraic \& Geometric Topology, 8 (2008) 19--51.



\end{thebibliography}
\end{document}